\newcommand{\R}{\ensuremath{\mathbb{R}}}
\renewcommand{\epsilon}{\varepsilon}
\newcommand{\e}{\mathrm{e}}
\newcommand{\pd}{\partial}
\newcommand{\pr}{\partial}
\newcommand{\inner}[2]{\left\langle #1 \, , \, #2\right\rangle} 
\def\labelitemi{--}
\def\ba #1\ea {\begin{align} #1\end{align}}
\def\bann #1\eann {\begin{align*} #1\end{align*}}
\def\ben #1\een {\begin{enumerate} #1\end{enumerate}}
\def\bi #1\ei {\begin{itemize}\renewcommand\labelitemi{--} #1\end{itemize}}
\newtheorem{theorem}{Theorem}[section]
\newtheorem{lemma}[theorem]{Lemma}
\newtheorem{properties}[theorem]{Properties}
\newtheorem{corollary}[theorem]{Corollary}
\title[Sharp distance comparison for CSF on $S^2$]{Sharp distance comparison for curve shortening flow on the round sphere}
\author{Paul Bryan}
\address{School of Mathematical and Physical Sciences, Macquarie University, Sydney, NSW, Australia}
\email{paul.bryan@mq.edu.au}
\author{Mat Langford}
\address{Mathematical Sciences Institute, Australian National University, Canberra, ACT, Australia}
\email{mathew.langford@anu.edu.au}
\author{Jonathan J. Zhu}
\address{Department of Mathematics, University of Washington, Seattle, WA, USA}
\email{jonozhu@uw.edu}
\begin{document}

\maketitle

\begin{abstract}
We prove that curve shortening flow on the round sphere displays sharp chord-arc improvement, precisely as in the planar setting (Andrews--Bryan, Comm. Anal. Geom., 2011). As in the planar case, the sharp estimate implies control on the curvature, resulting in a direct and efficient proof that simple spherical curves either contract to round points (in finite time) or converge to great circles (in infinite time).
\end{abstract}

\section{introduction}

Curve shortening flow is the formal gradient flow of the length functional for immersed curves in Riemannian manifolds. The behaviour of simple closed planar curves under curve shortening flow is described by the theorems of Gage--Hamilton \cite{GageHamilton86} and Grayson \cite{Grayson}: any such curve must remain simple and shrink to an asymptotically round point after a finite amount of time (a number of alternative proofs have since emerged, see \cite{MR2967056,MR2843240,AndrewsBryan,MR1369140,Huisken96}). This result was extended to curve shortening flow of simple closed curves on certain Riemannian surfaces by Gage \cite{MR1046497} and Grayson \cite{MR979601} (for subsequent approaches, see \cite{MR1087347,EdelenChordarc,MR2668967}). In this setting, the curve either converges to a round point in finite time, or converges (subsequentially) to a closed geodesic as $t\to \infty$. 
If the ambient surface is the round sphere, the latter may be upgraded to full convergence: 

\begin{theorem}[
Gage \cite{MR1046497} and Grayson \cite{MR979601}]\label{thm:S2Grayson}
Let $\{\Gamma_t\}_{t\in [0,T)}$ be a maximal curve shortening flow starting from a simple regular closed curve $\Gamma_0$ on $S^2$. The curves $\Gamma_t$ are simple and either:
\begin{enumerate}
\item[(a)] $T=\infty$, in which case $\Gamma_t$ converges smoothly as $t\to \infty$ to a great circle; or 
\item[(b)] $T<\infty$, in which case $\Gamma_t$ converges uniformly to some $z\in S^2$, and
\[
\tilde\Gamma_t\doteqdot \frac{\Gamma_t -z}{\sqrt{1-\mathrm{e}^{-2(T-t)}}}
\]
converges smoothly as $t\to T$ to the unit origin-centred circle in $T_zS^2\subset \R^3$.
\end{enumerate}
\end{theorem}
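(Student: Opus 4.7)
My plan is to derive Theorem~\ref{thm:S2Grayson} from the sharp chord-arc estimate that is the main result of the paper, together with standard parabolic compactness. The estimate provides a lower bound for the extrinsic (spherical) distance between any two points of $\Gamma_t$ in terms of their arc length, and it is sharp against a canonical comparison profile. This bound immediately preserves simplicity, since the comparison profile is strictly positive whenever the arc distance is nonzero.

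The key technical step is to convert the chord-arc bound into a pointwise curvature bound: Taylor-expanding at the diagonal to second order in arc length yields a bound on the geodesic curvature $k$ in terms of the comparison profile, and parabolic bootstrapping then gives bounds on all higher covariant derivatives. These bounds are uniform as long as the length $L(t)$ stays bounded below. Since $L'(t)=-\int k^2\,ds\le 0$, $L$ is non-increasing; either $L(t)\to 0$ as $t\to T$, in which case comparison with the shrinking geodesic circle forces $T<\infty$, or $L$ stays bounded below and the smoothness estimates force $T=\infty$.

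In the case $T<\infty$, the chord-arc estimate forces $\diam(\Gamma_t)\to 0$, so $\Gamma_t$ converges uniformly to some $z\in S^2$. The scaling factor $\lambda(t)\doteqdot\sqrt{1-\E^{-2(T-t)}}$ is precisely $\sin\rho(t)$, where $\rho(t)$ is the radius of the geodesic circle shrinking to a point at time $T$: a direct ODE computation using $\rho'=-\cot\rho$ gives $(\cos\rho)'=\cos\rho$ under CSF on $S^2$. The rescaled spherical chord-arc profile, blown up by $\lambda(t)^{-1}$, therefore converges as $t\to T$ to the sharp planar Andrews--Bryan profile, and rigidity of that profile forces $\tilde\Gamma_t$ to the unit origin-centred circle in $T_zS^2\subset\R^3$.

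In the case $T=\infty$, uniform curvature bounds and a positive lower bound on length give smooth subsequential convergence of $\Gamma_{t_j}$ for some $t_j\to\infty$, and finiteness of $\int_0^\infty\int k^2\,ds\,dt$ forces any such limit to be totally geodesic, hence a great circle. The main obstacle is upgrading to full smooth convergence, since great circles form a nontrivial three-parameter moduli space and a priori the curve could drift within it. My plan is to close this either by using the rigidity of the sharp chord-arc estimate to pin down the limit (the estimate controls barycentres and prevents drift along the moduli), or by linearising at the great circle and extracting an exponential rate from the spectral gap of $\pd_t-\pd_s^2$ on small graphs over a great circle in $S^2$.
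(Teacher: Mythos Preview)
Your outline is on the right track but misses the feature of the sharp chord-arc estimate that does the real work: the estimate is \emph{self-improving}, with the profile parameter $a$ replaced by $a\mathrm{e}^{-4\pi^2\tau}$, where $\tau=\int_0^t L^{-2}\,dt$. Taylor expansion at the diagonal therefore gives not merely a uniform curvature bound but the quantitative estimate
\[
\kappa^2+1\le\Big(\tfrac{2\pi}{L}\Big)^2\Big(1+\tfrac{2a^2}{\pi^2}\mathrm{e}^{-8\pi^2\tau}\Big).
\]
In the infinite-time case this is decisive: since $L$ is trapped between $2\pi$ and $L(0)$ (the lower bound coming from $(L^2)'\le 2(L^2-4\pi^2)$), one has $\tau\sim t$, so $\kappa^2$ decays \emph{exponentially} directly from the chord-arc bound. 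Bernstein estimates and interpolation propagate this to all derivatives, and integrating the normal speed $\kappa$ shows the position itself converges. No linearisation or spectral-gap argument is needed, and there is no drift issue to resolve. Your alternative via ``rigidity of the sharp chord-arc estimate to pin down the limit (the estimate controls barycentres)'' is too vague to assess; it is not clear how a chord-arc lower bound, which is translation-invariant in the relevant sense, would select a particular great circle without the exponential rate.

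In the finite-time case the paper proceeds the same way: from the curvature bound and Fenchel one first gets two-sided control $4\pi^2(1-\mathrm{e}^{-2(T-t)})\le L^2\le C(1-\mathrm{e}^{-2(T-t)})$, hence $\tau\to\infty$ with an explicit rate; feeding this back yields $(1-\mathrm{e}^{-2(T-t)})\bar\kappa^2\to 1$ with a power rate in $(T-t)$, and interpolation plus Bernstein again give smooth convergence of the rescaled curves. Your plan to pass to the limit in the rescaled chord-arc profile and invoke rigidity of the planar Andrews--Bryan profile would, at best, identify subsequential limits as unit circles; you would still need a rate (or a separate uniqueness argument) to upgrade to full convergence. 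The paper's route supplies the rate for free from the $\mathrm{e}^{-8\pi^2\tau}$ improvement, so you should use it.
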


Our goal is to provide a direct and efficient proof of Theorem \ref{thm:S2Grayson} using the chord-arc method introduced by Huisken \cite{Huisken96} and developed by Andrews--Bryan \cite{AndrewsBryan} (see also Edelen \cite{EdelenChordarc} and Johnson--Muraleetharan \cite{MR2668967}). Andrews and Bryan observed that it is possible to establish a \textit{sharp} estimate for the chord-arc profile under planar curve shortening flow --- sharp enough to control the curvature --- leading to a quick and direct proof of Grayson's theorem. We shall see that, remarkably, a sharp chord-arc estimate also holds for spherical curve shortening flow; in fact, the same estimate as the planar case! This estimate is again strong enough to control the curvature, leading very quickly and directly to Theorem \ref{thm:S2Grayson} (in both cases).\pagebreak

In Huisken's distance comparison argument \cite{Huisken96}, as well as the work of Edelen \cite{EdelenChordarc} and Johnson--Muraleetharan \cite{MR2668967} for curve shortening flow on surfaces, one compares the extrinsic distance (chordlength) in the ambient space to the intrinsic distance (arclength) along a simple curve $\Gamma$. It is shown that, under curve shortening flow, the chord-arc profile has a positive lower bound, which is sufficient to control the behaviour of the flow at the onset of a singularity, although via a rather more involved and indirect route. This is also the approach taken in the recent distance comparison arguments for \emph{free-boundary} curve-shortening flow due to Langford--Zhu \cite{LZ} and Ko \cite{DK}. 

A key motivation for the present work is the observation that the two asymptotic profiles in Theorem \ref{thm:S2Grayson} are both circular (in the Euclidean sense), and hence have the same chord-arc profile \textit{with respect to the Euclidean chordlength}, when $S^2$ is regarded as a subset of $\mathbb{R}^3$. This motivates us to define, in Section \ref{sec:prelim}, a spherical chord-arc profile which compares arclength to the Euclidean chordlength. In Section \ref{sec:chord-arc}, we are able to show (via a multi-point maximum principle method) that this chord-arc profile satisfies precisely the same differential inequality obtained in the planar case. This means that we are able to compare it to the Euclidean comparison profile found in Andrews--Bryan \cite{AndrewsBryan}. The resulting estimate is sharp enough to control the curvature, at which point the classification of long-time behaviour follows readily (see Sections \ref{sec:curv} and \ref{sec:conv}).

\subsection*{Acknowledgements}
P.B. was supported by the Australian Research Council (grant DP220100067). M.L. was supported by the Australian Research Council (grant DE 200101834). J.Z. was supported in part by the Australian Research Council (grant FL150100126) and the National Science Foundation (grant DMS-1802984).

\section{Preliminaries}
\label{sec:prelim}

A family $\{\Gamma_t\}_{t\in I}$ of regular curves $\Gamma_t$ on the unit two-sphere $S^2$ evolves by curve shortening flow if there exists a smooth family $\gamma:M^1\times I\to S^2$ of smooth immersions $\gamma(\cdot,t):M^1\to S^2$ of $\Gamma_t=\gamma(M^1,t)$ which satisfy
\[
\pd_t\gamma=\vec \kappa\,,
\]
where $\vec\kappa$ is the curvature vector of $\gamma$. Denoting by $T=\gamma'/\vert\gamma'\vert$ the unit tangent vector field, we choose the unit normal vector field $N$ so that $\gamma=N\times T$ (the cross product on $\mathbb{R}^3$), and define the curvature according to $\vec\kappa=-\kappa N$. If we parametrise $\gamma$ by arclength $s$, this ensures that, as a map into $\R^3$,
\[
\frac{d^2\gamma}{ds^2} =\frac{dT}{ds}= -\kappa N -\gamma\,.
\] 
In particular,
\begin{equation}\label{eq:space curvature}
\overline\kappa{}^2=1+\kappa^2\,,
\end{equation}
where $\overline\kappa$ is the space curvature.

Given $x,y\in\Gamma\subset S^2$, we define the unit vector $w(x,y)= \frac{x-y}{|x-y|}$ and the Euclidean chordlength $d(x,y)=|x-y|$. The latter is related to the spherical chordlength $\rho$ by
\begin{equation}\label{eq:spherical distance}
\cos \rho = \langle x,y\rangle = 1-\frac{d^2}{2}.
\end{equation}
Observe also that
\begin{equation}\label{eq:spherical trig identity}
\langle w,x\rangle =-\langle w,y\rangle = \frac{d}{2}.
\end{equation}

We denote by $\ell(x,y)$ the arclength between $x,y\in\Gamma$ and by $L$ the total length. At any given pair of points, we may always orient our parametrisation so that $\pr_x \ell=-1$ and $\pr_y \ell=1$, where
\[
\pr_x \ell(x,y)\doteqdot\left.\frac{d}{dh}\right|_{h=0}\ell(x+h,y)\;\;\text{and}\;\; \pr_y\ell(x,y)\doteqdot\left.\frac{d}{dh}\right|_{h=0}\ell(x,y+h)\,.
\]

We consider the chord-arc profile $\psi_\Gamma$ of $\Gamma$ relative to the \emph{Euclidean} chordlength,
\begin{equation}\label{eq:chordarc definition}
\psi_\Gamma(z):=\inf\{d(x,y):\ell(x,y)=z\}\,.
\end{equation}
Observe that all parallels on $S^2$ have chord-arc profile $\psi(z)=\frac{2}{\bar\kappa}\sin(\frac{\bar\kappa z}{2})=\frac{L}{\pi}\sin(\frac{\pi z}{L})$ according to \eqref{eq:chordarc definition} (not just arbitrarily small ones); since we must take into account great circular limits, and since a sharp estimate should take equality on shrinking parallels, this motivates consideration of the Euclidean (rather than spherical) chordlength.

\begin{lemma}[Cf. {\cite[Proposition 3.12]{EGF}}]\label{lem:chordarc expansion}
Given any simple spherical curve $\Gamma$, 
\begin{equation}\label{eq:chord arc expansion}
\psi_\Gamma(z)=z-\tfrac{1}{24}(\max_{\Gamma}\kappa^2+1)z^3+O(z^5)\;\;\text{as}\;\; z\to 0\,.
\end{equation}
\end{lemma}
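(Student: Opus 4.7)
The plan is to compute a Taylor expansion in arclength of the Euclidean chord between two points symmetric about a chosen midpoint $p \in \Gamma$, and then take the infimum over $p$. Parametrise $\Gamma$ by arclength via $\gamma:M^1 \to S^2$. For each $p = \gamma(s_0)$ and small $z > 0$, set $x = \gamma(s_0 - z/2)$ and $y = \gamma(s_0 + z/2)$, so that $\ell(x,y) = z$; for $z$ smaller than some $z_0 > 0$ depending only on $\Gamma$, every pair realising the infimum in \eqref{eq:chordarc definition} arises this way, so $\psi_\Gamma(z) = \inf_{p \in \Gamma} d(\gamma(s_0 - z/2), \gamma(s_0 + z/2))$.

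The next step is to derive the relevant structure equations for $\gamma$ regarded as a curve in $\mathbb{R}^3$. The paper already gives $T' = \gamma'' = -\kappa N - \gamma$; differentiating the orthonormality relations $\langle N,T\rangle = 0$ and $\langle N,\gamma\rangle = 0$ in the frame $\{T,N,\gamma\}$ immediately yields the companion identity $N' = \kappa T$. Combining these,
\[
\gamma'''(s_0) = -\kappa'(s_0)\,N - \kappa(s_0)^2\,T - T = -\kappa'(s_0)\,N - (\kappa^2(s_0)+1)\,T,
\]
the $+1$ being precisely the contribution of the ambient normal term $-\gamma$ that is absent in the planar case.

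Then I would Taylor expand $\gamma(s_0 \pm z/2)$ about $s_0$. Since only odd-order derivatives contribute to the difference,
\[
\gamma(s_0+z/2) - \gamma(s_0-z/2) = z\,T + \frac{z^3}{24}\gamma'''(s_0) + O(z^5),
\]
with remainder uniform in $p \in \Gamma$ by smoothness of $\gamma$ and compactness of $\Gamma$. Substituting the formula for $\gamma'''(s_0)$, squaring, and using $T \perp N$ gives
\[
d(x,y)^2 = z^2 - \frac{z^4}{12}\bigl(\kappa^2(s_0)+1\bigr) + O(z^6),
\]
and extracting the square root yields $d(x,y) = z - \tfrac{z^3}{24}(\kappa^2(s_0)+1) + O(z^5)$, still uniformly in $p$.

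The final step is to take the infimum over $p$. Because the $O(z^5)$ remainder is uniform in $p$, minimising the explicit term amounts to maximising $\kappa^2(s_0)$, yielding precisely \eqref{eq:chord arc expansion}. The only technical point requiring care is the uniformity of the Taylor remainder across $\Gamma$; this is not a genuine obstacle, as smoothness of $\gamma$ on the compact $M^1$ bounds all required derivatives of $\gamma$, $\kappa$ and $\kappa'$ from above.
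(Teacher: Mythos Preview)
Your proof is correct and follows essentially the same approach as the paper: both expand the squared chord about the midpoint using the spherical Frenet--Serret equations $T_s=-\kappa N-\gamma$, $N_s=\kappa T$, extract the square root to obtain $d=z-\tfrac{1}{24}\bar\kappa^2(x_0)z^3+O(z^5)$, and then take the infimum over the midpoint. The paper phrases the last step more tersely (``an upper bound for the infimum which is certainly attained''), while you spell out the uniformity of the $O(z^5)$ remainder more carefully, but the content is identical.
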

\begin{proof}
Given any pair of distinct points $(x,y)$ on $\Gamma$, parametrise $\Gamma$ by arclength $s$ from a midpoint $x_0$ of $(x,y)$ and consider the function
\[
f(s):= \vert\gamma(\tfrac{s}{2})-\gamma(-\tfrac{s}{2})\vert^2\,.
\]
Differentiating the Frenet--Serret formulae,
\[
\gamma_s=T,\;\; T_s=-\kappa N-\gamma,\;\; N_s=\kappa T\,,
\]
yields
\[
f(s)=s^2-\tfrac{1}{12}\bar\kappa^2(x_0)s^4+O(s^6)\,,
\]
which implies that
\[
d(x,y)=d(\gamma(-\tfrac{\ell}{2}),\gamma(\tfrac{\ell}{2}))=\ell-\tfrac{1}{24}\bar\kappa^2(x_0)\ell^3+O(\ell^5)\,.
\]
The expansion \eqref{eq:chord arc expansion} follows since this gives an upper bound for the infimum which is certainly attained.
\end{proof}

It is instructive to observe that, for any simple spherical curve $\Gamma$, \eqref{eq:spherical distance} implies that
\bann
\max_{\Gamma}\vert\bar\kappa\vert^2s^2
\ge\left(\int_{0}^{s}\vert\bar\kappa\vert\,ds\right)^2=\left(\int_{0}^s\left\vert\frac{dT}{ds}\right\vert\,ds\right)^2\ge\left(\arccos\!\inner{T(s)}{T(0)}\right)^2
\eann
since $T$ is itself a spherical curve. Defining $\bar K:=\max_{\Gamma}\vert\bar\kappa\vert$, we may thus estimate
\bann
d(x,y)\ge{}& \inner{\gamma(\tfrac{\ell}{2})-\gamma(-\tfrac{\ell}{2})}{T(0)}=\int_{-\frac{\ell}{2}}^{\frac{\ell}{2}}\inner{T(s)}{T(0)}ds\ge\int_{-\frac{\ell}{2}}^{\frac{\ell}{2}}\cos(\bar Ks)\,ds=\tfrac{2}{\bar K}\sin\left(\tfrac{\bar K\ell}{2}\right)
\eann
with equality only if $\overline\kappa$ is constant. 
Note that $\frac{2}{\bar K}\sin(\frac{\bar Kz}{2})=z-\frac{\bar K^2}{24}z^2+O(z^5)$.

\section{The chord-arc estimate}
\label{sec:chord-arc}

We shall establish a sharp estimate for the chord-arc profile. As in \cite{AndrewsBryan}, this is achieved by preserving non-negativity of the auxiliary function
\[
Z(x,y,t):= d(x,y) - L(t) \phi\left(\frac{\ell(x,y,t)}{L(t)},t\right)
\]
for a suitable barrier $\phi:[0,1]\times [0,T)\to\R$. Here $\ell(x,y,t)$ is the arclength along $\Gamma_t$ and $L(t)$ is the total length of $\Gamma_t$. The argument will hinge on the following properties which we impose on $\phi$.
\begin{properties}\label{key properties}We assume that
\begin{enumerate}[(i)]
\item\label{key property i} $\phi(1-z) = \phi(z)$ for all $z\in[0,1]$,
\item\label{key property ii} $|\phi'|\le 1$,
\item\label{key property iii} $\phi$ is strictly concave, and 
\item\label{key property iv} given any $L>0$ such that $L\phi(z)\le 2$ for all $z\in[0,1]$, the function $z\mapsto h(L\phi(z))$ is strictly concave, where $h(d):=\arccos\left(1-\frac{d^2}{2}\right)$.
\end{enumerate}
\end{properties}

\subsection{Variation of $Z$}

The temporal variation of $Z$ is given by
\begin{equation}\label{eq:DtZ}
 \pr_t Z = \langle w, - \kappa_x N_x + \kappa_y N_y\rangle - L_t \left(\phi - \frac{\ell}{L}\phi'\right) - L \phi_t - \ell_t \phi' .
\end{equation}
The first spatial variation of $Z$ is given by
\begin{align*}
\pr_x Z = {}&\langle w,T_x\rangle + \phi',\\
\pr_y Z = {}&-\langle w, T_y\rangle -\phi',
\end{align*}
while the second is given by
\begin{align*}
\pr_x^2 Z = \frac{1}{d}(1-\langle w,T_x\rangle^2) + \langle w, -x-\kappa_x N_x\rangle - \frac{1}{L} \phi'',\\
\pr_y^2 Z = \frac{1}{d}(1-\langle w,T_y\rangle^2) - \langle w, -y-\kappa_y N_y\rangle - \frac{1}{L} \phi'',\\
\pr_x \pr_y Z = -\frac{1}{d}(\langle T_x, T_y\rangle -\langle w,T_x\rangle\langle w,T_y\rangle) + \frac{1}{L} \phi''\,.
\end{align*}
In particular, at a (spatial) critical point of $Z$,
\begin{equation}\label{eq:DZ1}
\langle w,T_x\rangle = \langle w,T_y\rangle = -\phi'
\end{equation}
and hence, recalling \eqref{eq:spherical trig identity},
\begin{subequations}\label{eq:DDZ}
\begin{align}
\pr_x^2 Z = \frac{1}{d}\left(1-\frac{d^2}{2} -\phi'^2\right) - \langle w, \kappa_x N_x\rangle - \frac{1}{L} \phi'',\\
\pr_y^2 Z = \frac{1}{d}\left(1-\frac{d^2}{2} -\phi'^2\right) + \langle w, \kappa_y N_y\rangle - \frac{1}{L} \phi'',\\
\pr_x \pr_y Z = - \frac{1}{d}\langle T_x, T_y\rangle +\frac{1}{d}\phi'^2 + \frac{1}{L} \phi'' 
\end{align}
\end{subequations}
at such a point.

\subsection{Relative configuration of tangents at the critical point}

If the critical point is a minimum with value zero, then the relative configuration of the tangent lines in space may be completely characterised (compare \cite[Lemma 3.13]{EGF} in the planar setting).
\begin{lemma}\label{lem:critical configuration}
At a zero minimum $(x,y)$ of $Z(\cdot,\cdot,t)$, if $x\neq y$ then
\begin{equation}\label{eq:DZ2}
\langle T_x,T_y\rangle = 2\phi'^2-1\,.
\end{equation}
\end{lemma}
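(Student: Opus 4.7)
\medskip

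My plan: at a zero minimum $(x,y)$ with $x\neq y$, the critical point conditions in \eqref{eq:DZ1} already give $\langle w,T_x\rangle = \langle w,T_y\rangle = -\phi'$. These, together with $|T_x|=|T_y|=1$ and $\langle T_x,x\rangle = \langle T_y,y\rangle = 0$, determine $T_x$ and $T_y$ up to discrete sign choices, and the desired identity will emerge from a linear-algebra computation in an adapted frame combined with the minimum condition.

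I would introduce the orthonormal frame $\{w,u,n\}$ on $\R^3$ with $u=(x+y)/|x+y|$ and $n = w\times u$; the orthogonality $w\perp u$ follows from $\langle x-y,x+y\rangle = |x|^2-|y|^2 = 0$. In this frame $x = \tfrac{d}{2}w + cu$ and $y = -\tfrac{d}{2}w + cu$, where $c := \sqrt{1-d^2/4}$. Solving the three scalar constraints on $T_x$ gives
\[
T_x = -\phi'\, w + \tfrac{\phi' d}{2c}\,u + \epsilon_x\, a\, n,\qquad a := \sqrt{1-\phi'^2/c^2},\ \epsilon_x\in\{\pm 1\},
\]
and analogously $T_y = -\phi'\, w - \tfrac{\phi' d}{2c}\, u + \epsilon_y\, a\, n$. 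A direct computation yields
\[
\langle T_x,T_y\rangle = \phi'^2 - \tfrac{\phi'^2 d^2}{4c^2} + \epsilon_x\epsilon_y\, a^2,
\]
which, using $c^2 = 1-d^2/4$, simplifies to $2\phi'^2 - 1$ precisely when $\epsilon_x\epsilon_y = -1$ (and to $1 - 2\phi'^2 d^2/(4-d^2)$ when $\epsilon_x\epsilon_y = +1$).

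The remaining step, which is the heart of the argument, is to exclude the case $\epsilon_x\epsilon_y=+1$. Here I would exploit the auxiliary function $\Psi(x,y) := \rho(x,y) - h(L\phi(\ell/L))$, where $\rho$ is the spherical distance and $h$ is as in Property \eqref{key property iv}. Since $h$ is strictly monotone, $Z\geq 0$ gives $\Psi\geq 0$, and $Z(x,y)=0$ gives $\Psi(x,y)=0$, so $\Psi$ also attains a zero minimum at $(x,y)$. I would then exploit the PSD Hessian of $\Psi$ in the direction $(\partial_x-\partial_y)$ (which changes $\ell$); by Property \eqref{key property iv} the term $(\partial_x-\partial_y)^2 h(L\phi(\ell/L)) = 4L^{-2}(h\circ L\phi)''(\ell/L)$ is strictly negative, so $\Psi$-PSD produces a strict lower bound that, paired with the analogous $Z$-PSD inequality in the $(\partial_x+\partial_y)$ direction (which preserves $\ell$), is inconsistent with $\epsilon_x=\epsilon_y$.

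The hard part is this last step: the first-order conditions alone leave a two-fold discrete ambiguity in each tangent, and the Hessian entries in \eqref{eq:DDZ} involve the geodesic curvatures $\kappa_x, \kappa_y$, which are not pinned down by the critical-point equations. The additional rigidity supplied by Property \eqref{key property iv} — strict concavity of the sphere-adapted composition $z\mapsto h(L\phi(z))$ — is precisely what is needed to distinguish the two sign cases and close the argument.
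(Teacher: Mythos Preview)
Your linear-algebra reduction is essentially the paper's, only in the orthonormal frame $\{w,u,n\}$ rather than the oblique basis $\{x,y,x\times y\}$; the outcome is the same two-fold ambiguity $\epsilon_x\epsilon_y=\pm 1$. One omission: your frame requires $x+y\ne 0$, and the antipodal case must be handled separately (the paper does so using $(\partial_x+\partial_y)^2Z\ge 0$ directly, since then $\phi'=0$).

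The real gap is in your exclusion of $\epsilon_x\epsilon_y=+1$. You propose combining the Hessian inequalities for $Z$ and $\Psi=\rho-h(L\phi(\ell/L))$ in the $(\partial_x\pm\partial_y)$ directions, invoking the strict concavity of $h\circ\phi_L$ from Property~\eqref{key property iv}. But, as you yourself note, every second derivative of $d$ (and hence of $\rho=h(d)$) carries the unconstrained curvature contribution $-\langle w,\kappa_xN_x\rangle+\langle w,\kappa_yN_y\rangle$. All of your inequalities are \emph{upper} bounds on this same quantity, so they do not combine to a contradiction; for either sign of $\epsilon_x\epsilon_y$ one can choose $\kappa_x,\kappa_y$ so that $(x,y)$ is a genuine local zero-minimum of both $Z$ and $\Psi$. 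Local second-order information alone cannot separate the two cases.

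The paper's argument is not a Hessian argument at all: it is global. If $\epsilon_x=\epsilon_y$ (equivalently $\langle N_x,w\rangle$ and $\langle N_y,w\rangle$ have the same sign), then because $\Gamma_t$ is a simple closed curve bounding a disc, the minimizing great-circle arc from $y$ to $x$ must meet $\Gamma_t$ at a third point $u$. One then has $\ell(x,u)+\ell(u,y)=\ell(x,y)$ and $\rho(x,u)+\rho(u,y)=\rho(x,y)$, and Property~\eqref{key property iv} is used as a \emph{value comparison} (strict superadditivity of $z\mapsto h(L\phi(z))$ at the origin, via strict concavity) to force $Z(x,u)<0$ or $Z(u,y)<0$, contradicting minimality. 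So Property~\eqref{key property iv} is indeed the key ingredient, but it enters through this embeddedness/intermediate-point argument rather than through a Hessian inequality. Your sketch is missing this global step.
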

\begin{proof}
We first deal with the possibility that $x+y=0$. In that case, $x-y$ is orthogonal to both $T_xS^2$ and $T_yS^2$. 
But then, since the critical point is a minimum, the identities \eqref{eq:DZ1} and \eqref{eq:DDZ} yield
\[
0\le (\pd_x+\pd_y)^2Z=-(1+\inner{T_x}{T_y})
\]
and hence
\[
-1=-\vert T_x\vert\vert T_y\vert\le \inner{T_x}{T_y}\le -1\,,
\]
which, by \eqref{eq:DZ1}, is the claim in this case.

So we may assume that $x+y\ne 0$. 
In particular, $x$ and $y$ are not parallel, so $\{x,y,e:=x\times y\}$ is a basis for $\mathbb{R}^3$ and we may write \[T_x = \alpha^x x + \beta^x y + \delta^x e, \qquad T_y = \alpha^y x + \beta^y y + \delta^y e.\]
Using $\langle T_x,x\rangle = \langle T_y,y\rangle=0$, $|T_x|=|T_y|=1$, and the critical point conditions 
\eqref{eq:DZ1}, one may solve for the coefficients; we obtain
\[ \alpha^x = -\left(1-\frac{d^2}{2}\right)\frac{\phi'}{d(1-\frac{d^2}{4})}  , \qquad \beta^x = \frac{\phi'}{d(1-\frac{d^2}{4})},\]
\[ \alpha^y = -\frac{\phi'}{d(1-\frac{d^2}{4})}, \qquad \beta^y = \left(1-\frac{d^2}{2}\right)\frac{\phi'}{d(1-\frac{d^2}{4})},\] 
\[ (\delta^x)^2 = (\delta^y)^2= 1- \frac{\phi'^2}{1-\frac{d^2}{4}}.\] 
%
%
Thus, depending on whether $\delta^x = \pm \delta^y$, we have
\[\langle T_x,T_y\rangle = \phi'^2 \frac{1-\frac{d^2}{2}}{1-\frac{d^2}{4}} \pm \left(1- \frac{\phi'^2}{1-\frac{d^2}{4}}\right).\]

It remains to show that only the configuration with $\delta^x =- \delta^y$ is admissible at a zero minimum of $Z$. The argument is inspired by the planar case \cite{AndrewsBryan} (see \cite[Lemma 3.13]{EGF}). We first claim that $\langle N_x,w\rangle$ and $\langle N_y, w\rangle$ have opposite signs. Indeed, suppose that $\langle N_x,w\rangle$ and $\langle N_y, w\rangle$ have the same sign. In that case, since $\Gamma_t$ bounds a disk, the minimising great circular arc $\sigma:[0,1]\to S^2$ connecting $y$ to $x$ must contain another point $u$ of $\Gamma_t$ (note that $\langle N_x,w\rangle$ and $\langle N_y, w\rangle$ have the same signs as $\langle N_x,\sigma'(1)\rangle$ and $\langle N_y, \sigma'(0)\rangle$, respectively). Since $\ell(x,u)  +\ell(u,y) =\ell(x,y)$ and $\rho(x,u) +\rho(u,y) = \rho(x,y)$, the strict concavity condition Properties \ref{key properties} \eqref{key property iv} ensures that 
\bann
h(d(x,u))+h(d(u,y))
={}&h(d(x,y))\\
={}&h\left(L\phi\left(\frac{\ell(x,y)}{L}\right)\right)\\
={}&h\left(L\phi\left(\frac{\ell(x,u)+\ell(u,y)}{L}\right)\right)\\
<{}&h\left(L\phi\left(\frac{\ell(x,u)}{L}\right)\right)+h\left(L\phi\left(\frac{\ell(u,y)}{L}\right)\right).
\eann
Therefore either $Z(x,u,t)$ or $Z(u,y,t)$ is strictly less than $Z(x,y,t)$, which contradicts the assumption that the minimum of $Z$ occurs at $(x,y,t)$. So $\langle N_x,w\rangle$ and $\langle N_y, w\rangle$ must indeed have opposite signs. 

Now recall that
\[
N_x = T_x\times x= -\beta^x e + \delta^x (e\times x),
\]
which implies that $d\langle N_x,w\rangle = -\delta^x\det(e,x,y)$. By similar reasoning, we also find that $d\langle N_y,w\rangle = -\delta^y\det(e,x,y)$. This yields $\delta^x=-\delta^y$, which completes the proof. 
\end{proof}

\subsection{The differential inequality}

Recall that $\bar{\kappa}$ denotes the space curvature; by \eqref{eq:space curvature}, we may estimate
\begin{align}
\label{eq:dLdt}
-L_t ={}& \int_\gamma \kappa^2 ds=-L + \int_\gamma \bar{\kappa}^2 ds \geq -L + \frac{1}{L} \left( \int_\gamma |\bar{\kappa}| ds\right)^2
\end{align}
(with strict inequality unless $\overline\kappa$, and hence also $\kappa$, is constant) and, similarly,
\begin{align*}
-\ell_t ={}& \int_{[x:y]}\kappa^2 ds=-\ell +\int_{[x:y]}\bar\kappa^2 ds\geq -\ell +\frac{1}{\ell} \left( \int_{[x:y]} |\bar{\kappa}| ds\right)^2\,,
\end{align*}
where $[x:y]$ denotes the shorter of the two portions of $\gamma$ that join $x$ to $y$.

By Fenchel's theorem, we may estimate $\int_\gamma |\bar{\kappa}| ds \geq 2\pi$, and so
\begin{equation}\label{eq:DtL}
L_t \leq L - \frac{4\pi^2}{L}.
\end{equation} 
On the other hand, the unit tangent $T$ defines a curve in $S^2$ joining $T_x$ to $T_y$, which has speed $|\bar{\kappa}|$ (when parametrised by the arclength on $\gamma$); since the arclength $\int_{[x:y]} |\bar{\kappa}| ds$ of $T$ is bounded below by the $S^2$-distance $\rho(T_x,T_y)$, it follows from \eqref{eq:spherical distance} that
\begin{equation}\label{eq:Dtell}
\ell_t \leq \ell - \frac{1}{\ell} (\arccos\langle T_x,T_y\rangle)^2.
\end{equation}

Suppose now that $Z$ reaches zero at some pair of off-diagonal points $x\neq y$ at some first time $t>0$. Applying \eqref{eq:DtL} and \eqref{eq:Dtell} to the temporal variation \eqref{eq:DtZ}, the gradient condition \eqref{eq:DZ2} to the second spatial variation \eqref{eq:DDZ}, and combining the results, we obtain
\begin{align*}
0\geq{}& \pr_t Z - (\pr_x - \pr_y)^2Z\\
\ge{}&\left(\frac{4\pi^2}{L}-L\right) \left(\phi - \frac{\ell}{L}\phi'\right) - L\phi_t +\left(\frac{1}{\ell}(\arccos(2\phi'^2-1))^2 - \ell\right) \phi' +d + 4 \frac{\phi''}{L}
\end{align*}
at $(x,y,t)$ (note that $\phi(z)\ge z\phi'(z)$ due to concavity and non-negativity of $\phi$). Using $d-L\phi=Z=0$ and $\arccos(2\phi'^2-1)=2\arccos(\phi')$, and estimating $(\arccos(\phi'))^2$ as in \cite[Lemma 3.15]{EGF}, we arrive at the inequality
\begin{align}\label{eq:comparison principle}
L^2\phi_t
\ge{}&4(\phi''+\pi^2\phi)+\frac{8\pi\phi'}{\tan(\pi\frac{\ell}{L})}-\frac{8\pi\phi'^2}{\sin(\pi\frac{\ell}{L})}
\end{align}
at $(x,y,t)$, with strict inequality unless the curve is a Euclidean circle (due to the strict concavity of $\phi$ and the application of H\"older's inequality to obtain \eqref{eq:DtL}). This is \textit{precisely} the inequality arrived at in the planar case (see \cite[(3.32)]{EGF})! 
 
\subsection{Completing the proof of the chord-arc estimate}

It remains only to show that the Euclidean barrier,
\[\phi(z):=a^{-1}\arctan(\tfrac{a}{\pi}\sin(\pi z))\,,\]
satisfies \eqref{key property iv} of Properties \ref{key properties}.

\begin{lemma}\label{lem:phi conditions}
The function $\phi(z):=a^{-1}\arctan(\frac{a}{\pi}\sin(\pi z))$ satisfies Properties \ref{key properties}. 
\end{lemma}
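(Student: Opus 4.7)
Properties (i)--(iii) follow by direct calculation, exactly as in the planar case \cite{AndrewsBryan}: (i) is immediate from $\sin(\pi(1-z)) = \sin(\pi z)$; (ii) follows from
\[
\phi'(z) = \frac{\cos(\pi z)}{1 + (a/\pi)^2\sin^2(\pi z)},
\]
since both $|\cos(\pi z)|\le 1$ and the denominator is at least $1$; and (iii) follows by computing
\[
\phi''(z) = -\frac{\pi^3\sin(\pi z)\,[\pi^2 + a^2(1 + \cos^2(\pi z))]}{[\pi^2 + a^2\sin^2(\pi z)]^2},
\]
which is strictly negative on $(0, 1)$.

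To establish the novel condition (iv), the plan is to use the identity $h(u) = 2\arcsin(u/2)$, so that setting $H(z) := h(L\phi(z))$ yields $\sin(H/2) = L\phi/2$. Differentiating this relation twice in $z$ (using $\cos(H/2) = \tfrac{1}{2}\sqrt{4 - L^2\phi^2}$) gives
\[
H''(z) = \frac{2L\bigl[(4 - L^2\phi^2)\phi'' + L^2\phi(\phi')^2\bigr]}{(4 - L^2\phi^2)^{3/2}}.
\]
Under the hypothesis $L\phi \le 2$, strict concavity of $H$ on $[0, 1]$ thus reduces to the pointwise inequality
\begin{equation}\label{proposal:iv}
(4 - L^2\phi^2)\phi'' + L^2\phi\,(\phi')^2 < 0 \qquad \text{on } (0, 1).
\end{equation}

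To conclude, I would substitute the explicit formulas for $\phi, \phi', \phi''$ above into \eqref{proposal:iv} and clear the common denominator $[\pi^2 + a^2\sin^2(\pi z)]^2$. Using the arctangent relation $\tan(a\phi) = (a/\pi)\sin(\pi z)$, the inequality reduces to an algebraic inequality in $\sin(\pi z)$, $\cos(\pi z)$, $a$, and $L$ which can be verified directly on $(0, 1)$. The bound $L\phi \le 2$ enters quantitatively here: in the limiting case $a = 0$ the reduction collapses cleanly to $(L^2 - 4\pi^2)\sin(\pi z)/\pi < 0$, which holds precisely when $L < 2\pi$, i.e.\ below the great-circle threshold dictated by $L\phi \le 2$.

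The principal obstacle is this final algebraic step. Since $\phi''$ contributes negatively to \eqref{proposal:iv} while $L^2\phi(\phi')^2$ contributes positively, one cannot appeal to $\phi'' < 0$ alone: a quantitative comparison is needed in which the specific trigonometric form of $\phi$ and the bound $L\phi \le 2$ combine to force strict negativity. This is the essentially new input beyond the planar setting.
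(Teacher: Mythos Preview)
Your reduction of (iv) to the pointwise inequality $(4-L^2\phi^2)\phi'' + L^2\phi(\phi')^2 < 0$ on $(0,1)$ is correct and is exactly what the paper does (after substituting the explicit expressions for $\phi'$ and $\phi''$, the paper rewrites this as $F(\phi)<0$ for an explicit $F$). Properties (i)--(iii) are also fine.

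The genuine gap is that you have not established this inequality; you have only computed the limiting case $a=0$ and asserted that the general case ``can be verified directly''. Two remarks. First, the inequality is not purely algebraic in $\sin(\pi z),\cos(\pi z),a,L$: the factor $4-L^2\phi^2$ still contains $\phi=a^{-1}\arctan\bigl(\tfrac{a}{\pi}\sin(\pi z)\bigr)$, so a transcendental term survives and one is forced to work with the ratio $\tfrac{\tan(a\phi)}{a\phi}$. Second, and more seriously, your $a=0$ heuristic points to $L<2\pi$ as the natural regime, but the hypothesis $L\phi\le 2$ allows $L>2\pi$ whenever $a>0$ (since $\max\phi=a^{-1}\arctan(a/\pi)<1/\pi$ strictly), and this is the substantive case. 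The paper handles $L\le 2\pi$ in a few lines, but for $L>2\pi$ it introduces $a_0$ via $\tfrac{2}{L}=\tfrac{1}{a_0}\arctan(a_0/\pi)$, passes to the quotient of the two competing terms in $F$, exploits monotonicity in $a$ to reduce to $a=a_0$, and after the substitution $X=\tan(a_0\phi)$, $Y=\tan(a_0\phi_0)$ is left with proving
\[
\frac{1}{1+Y^2}\,\frac{Y^2-X^2}{(\arctan Y)^2-(\arctan X)^2}\,\frac{\arctan X}{X}<1\qquad\text{for }0\le X<Y,
\]
which is then established by examining boundary behaviour and analysing the critical points in $Y$. This chain of reductions is the heart of the lemma; your proposal correctly locates the difficulty but does not yet supply the idea needed to resolve it.
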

\begin{proof}
The first three properties are clear. To verify the final property, we set $\phi_L := L\phi$ and will show explicitly that $(h\circ \phi_L)''<0$ for $z\in(0,1)$ (assuming $L\phi \le 2$). 

To compute $(h\circ \phi_L)''$, it is convenient to write (following \cite[\S 3.4]{EGF}) $\phi = \Psi \circ C$, where $\Psi(\zeta):= a^{-1} \arctan(a\zeta)$, $C(z):= \frac{1}{\pi}\sin(\pi z)$. Note that the function $\Psi$ satisfies $\Psi' = \frac{1}{1+a^2\zeta^2} = \frac{1}{1+\tan^2(a\Psi)}$ and $\Psi'' = -\frac{2}{\zeta} \Psi'(1-\Psi') = -\frac{2a \tan(a\Psi)}{(1+\tan^2(a\Psi))^2}$, and the function $C$ satisfies $C''=-\pi^2 C$ and $(C')^2 = 1-\pi^2 C^2$. Thus, 
\[ (\phi')^2 = \frac{1- \frac{\pi^2}{a^2}\tan^2(a\phi) }{(1+\tan^2(a\phi))^2}\,,\;\;\text{and}\;\; \phi'' = a^{-1}\tan(a\phi) \frac{\pi^2\tan(a\phi)^2 - \pi^2-2a^2}{(1+\tan^2(a\phi))^2}.\]

Recalling that $h(d) = \arccos\left(1-\frac{d^2}{2}\right)$, we thus obtain
\ba
(h\circ\phi_L)''(z)
={}&h'(\phi_L(z))\left(\frac{\phi_L(z)(\phi_L'(z))^2}{4-\phi_L^2(z)}+\phi_L''(z)\right)\nonumber\\
={}&Lh'(\phi_L(z))\left(\frac{L^2 \phi(z)(\phi'(z))^2}{4-L^2\phi^2(z)}+\phi''(z)\right)\nonumber\\
={}& \frac{1}{L}h'(\phi_L(z)) \frac{\pi^2 F(\phi(z))}{(1+\tan^2(a\phi(z)))^2(4-L^2 \phi(z)^2)}
,\label{eq:check phi epsilon}
\ea
where $F$ is defined by
\ba
F(\phi)
={}&(\tfrac{1}{\pi^2}-\tfrac{1}{a^2}\tan^2(a\phi))\!\big(L^2-a^2(4-L^2\phi^2)\tfrac{\tan(a\phi)}{a\phi}\big)\!-\!(1+\tfrac{a^2}{\pi^2})(4-L^2\phi^2)\tfrac{\tan(a\phi)}{a\phi}.\label{eq:F for concave}
\ea

Note that $\phi$ is increasing for $z\in [0,\frac{1}{2}]$, so to prove strict concavity it is enough to show that $F(\phi)<0$ for $\phi\in (0,\frac{1}{a}\arctan(\frac{a}{\pi}))$. The argument will split into two cases, depending on the total length: 

\textbf{Case 1}: $L\leq2\pi$. Note that $\phi < \frac{1}{a}\arctan(\frac{a}{\pi}) \leq \lim_{a\to 0}\frac{1}{a}\arctan(\frac{a}{\pi})=\frac{1}{\pi}$, so in particular $L^2 \phi^2 <4$. 

Now using $\tan X\leq X$ for $X\geq 0$, we have $\frac{1}{a^2}\tan^2(a\phi) \leq \phi^2 <\frac{1}{\pi^2}$, and so
\[
F(\phi)\le\left(\tfrac{1}{\pi^2}-\tfrac{1}{a^2}\tan^2(a\phi)\right)(L^2-a^2(4-L^2\phi^2))-(1+\tfrac{a^2}{\pi^2})(4-L^2\phi^2)\,.
\]
If $L^2-a^2(4-L^2\phi^2)\leq 0$, then $F(\phi)<0$ as the second term is strictly negative. Otherwise, we can estimate
\bann
F(\phi)\le{}&\left(\tfrac{1}{\pi^2}-\phi^2\right)(L^2-a^2(4-L^2\phi^2))-(1+\tfrac{a^2}{\pi^2})(4-L^2\phi^2)\\
={}&\tfrac{L^2}{\pi^2}-4-\tfrac{a^2}{\pi^2}(4-L^2\phi^2)(2-\pi^2\phi^2)\\
\le{}&\tfrac{L^2}{\pi^2}-4 <0.
\eann

\textbf{Case 2:} $L>2\pi$. Define $a_0>0$ and $\phi_0$ by
\[
\phi_0:=\tfrac{2}{L}=:\tfrac{1}{a_0}\arctan(\tfrac{a_0}{\pi})\,.
\]

The supposition that $L\phi(z) \leq 2$ for all $z\in[0,\frac{1}{2}]$ implies that $a\ge a_0$. In this case, it is convenient to consider the quotient of the two terms in (\ref{eq:F for concave}), rather than their difference $F$. To see that this quotient is less than one, it suffices, by the definitions of $a_0$ and $\phi_0$, to show that the function $(a_0,a,\phi_0,\phi)\mapsto Q_{(a_0,a)}(\phi_0,\phi)$ defined by
\[
\begin{split}
 Q_{(a,a_0)}(\phi,\phi_0):=\frac{\left(\frac{1}{a_0^2}\tan^2(a_0\phi_0)-\frac{1}{a^2}\tan^2(a\phi)\right)\left(1-a^2\left(\phi_0^2-\phi^2\right)\frac{\tan(a\phi)}{a\phi}\right)}{\left(1+\frac{a^2}{\pi^2}\right)\left(\phi_0^2-\phi^2\right)\frac{\tan(a\phi)}{a\phi}}
\end{split}
\]
is less than 1 for all $0\le \phi<\phi_0<\frac{1}{\pi}$ and $0<a_0<a$. In fact, since $Q$ is decreasing in $a$, it suffices to prove that $Q_{(a_0,a_0)}(\phi_0,\phi)<1$ for $0\le \phi<\phi_0<\frac{1}{\pi}$. 


We calculate (using $1+\tfrac{a_0^2}{\pi^2} = 1+\tan^2(a_0\phi_0)$) that
\bann
Q_{(a_0,a_0)}(\phi_0,\phi)={}&\frac{\frac{1}{a_0^2}\tan^2(a_0\phi_0)-\frac{1}{a_0^2}\tan^2(a_0\phi)}{\left(1+\tan^2(a_0\phi_0)\right)\left(\phi_0^2-\phi^2\right)\frac{\tan(a_0\phi)}{a_0\phi}}-\frac{\tan^2(a_0\phi_0)-\tan^2(a_0\phi)}{1+\tan^2(a_0\phi_0)}.
\eann
The last term is nonpositive, so substituting $X=\tan(a_0\phi)$, $Y=\tan(a_0\phi_0)$, we have 
\[
Q_{(a_0,a_0)}(\phi,\phi_0) \leq  \frac{1}{1+Y^2}\frac{Y^2-X^2}{(\arctan Y)^2-(\arctan X)^2}\frac{\arctan(X)}{X}. 
\]
It thus suffices to establish the inequality
\begin{equation}\label{eq:TBP}
q(X,Y):=\frac{1}{1+Y^2}\frac{Y^2-X^2}{(\arctan Y)^2-(\arctan X)^2}\frac{\arctan(X)}{X}<1
\end{equation}
for any $0\le X<Y<\infty$. To prove \eqref{eq:TBP}, we first observe that it holds at the extremes:
\begin{equation}\label{eq:extremes are good}
\lim_{Y\searrow X}q(X,Y)=1 \;\;\text{and}\;\; \lim_{Y\to\infty}q(X,Y)=\frac{\arctan(X)}{X(\frac{\pi^2}{4}-(\arctan X)^2)}<1\,.
\end{equation}
The limiting values are clear. To establish the inequality for the second limit, set $X=\cos\theta$ and estimate $\sin\theta<\theta$ and $\cos\theta\ge\frac{2}{\pi}(\frac{\pi}{2}-\theta)$ to obtain
\[
\frac{\arctan(X)}{X(\frac{\pi^2}{4}-(\arctan X)^2)}=\frac{(\frac{\pi}{2}-\theta)\tan\theta}{\theta(\pi-\theta)}\le\frac{\frac{\pi}{2}}{\pi-\theta}<1\,.
\]

Due to \eqref{eq:extremes are good}, for any $X\ge 0$ there is some $Y\in[X,\infty)$ at which $Y\mapsto q(X,Y)$ attains a global maximum. If $Y>X$ at the maximum, then (note that $q>0$)
\[
0=\frac{\pr}{\pr Y}\log q=-\frac{2Y}{1+Y^2}+\frac{2Y}{Y^2-X^2}-\frac{2\arctan Y}{(1+Y^2)((\arctan Y)^2-(\arctan X)^2)}\,,
\]
and hence
\[
q(X,Y)=\frac{\frac{1}{1+Y^2}\frac{Y}{\arctan Y}}{\frac{1}{1+X^2}\frac{X}{\arctan X}}\,.
\]

But the right-hand side is strictly less than $1$ when $Y>X$, since the function $h(X):=\frac{1}{1+X^2}\frac{X}{\arctan X}$ satisfies
\[
(\log h)'=\frac{1}{X}-\frac{1}{(1+X^2)\arctan X}-\frac{2X}{1+X^2}\le \frac{1}{X}-\frac{1}{(1+X^2)X}-\frac{2X}{1+X^2}=\frac{-X}{1+X^2}<0
\]
for $X>0$. This establishes \eqref{eq:TBP}, and thereby completes the proof of the lemma.
\end{proof}

Putting everything together, we can now establish the desired chord-arc estimate.

\begin{theorem}\label{thm:chordarc estimate}
If  
$\frac{d}{L} \geq \frac{1}{a}\arctan\left(\frac{a}{\pi}\sin\left(\frac{\pi\ell}{L}\right)\right)$ 
at $t=0$ for some $a>0$, then
\begin{equation}\label{eq:chord arc estimate}
\frac{d}{L} \geq \frac{1}{a \e^{-4\pi^2 \tau}} \arctan\left(\frac{a\e^{-4\pi^2 \tau}}{\pi}\sin \left(\frac{\pi\ell}{L}\right)\right)\;\;\text{at all}\;\; t\in[0,T)\,,
\end{equation}
where $\tau(t) := \int_0^t L^{-2} dt$.
\end{theorem}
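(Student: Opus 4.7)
The plan is to apply the parabolic maximum principle to the auxiliary function
\[
Z(x,y,t) := d(x,y) - L(t)\,\phi\!\left(\tfrac{\ell(x,y,t)}{L(t)},t\right)
\]
with the time-dependent barrier $\phi(z,t) := a(t)^{-1}\arctan\!\big(\tfrac{a(t)}{\pi}\sin(\pi z)\big)$ and $a(t) := a\,\E^{-4\pi^2\tau(t)}$. The hypothesis of the theorem is precisely $Z(\cdot,\cdot,0)\ge 0$, while its conclusion \eqref{eq:chord arc estimate} is precisely $Z(\cdot,\cdot,t)\ge 0$ for all $t\in[0,T)$. At each fixed $t$, the snapshot $\phi(\cdot,t)$ is an instance of the barrier treated in Lemma \ref{lem:phi conditions} with parameter $a(t)$, so Properties \ref{key properties} hold uniformly in $t$.

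The crucial step is to verify that the factor $\E^{-4\pi^2\tau(t)}$ is chosen precisely so that $\phi$ satisfies the comparison inequality \eqref{eq:comparison principle} as an \emph{equality}. Since $\dot a = -4\pi^2 a/L^2$, differentiating the ansatz gives
\[
L^2\phi_t = 4\pi^2\phi - \frac{4\pi^2 \tan(a\phi)}{a(1+\tan^2(a\phi))}\,.
\]
On the other hand, substituting the explicit formulas for $\phi'$ and $\phi''$ from the proof of Lemma \ref{lem:phi conditions} into the right-hand side of \eqref{eq:comparison principle} reproduces this same expression. Since \eqref{eq:comparison principle} coincides with the planar inequality, this calculation is unchanged from \cite[\S 3.4]{EGF} and is precisely the identity that originally motivated the Andrews--Bryan ansatz.

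With the ODE match in hand, the proof concludes by contradiction. After a perturbation $a \to a+\epsilon$ to secure strict initial inequality, suppose $t_0\in(0,T)$ is the first time at which $Z$ vanishes at some pair $(x_0,y_0)$. If $x_0\neq y_0$, then the chain of inequalities derived in Section \ref{sec:chord-arc} reads
\[
0 \geq \partial_t Z - (\partial_x-\partial_y)^2 Z \geq L^2\phi_t - \Big(4(\phi''+\pi^2\phi) + \tfrac{8\pi\phi'}{\tan(\pi\ell/L)} - \tfrac{8\pi\phi'^2}{\sin(\pi\ell/L)}\Big)
\]
at $(x_0,y_0,t_0)$; the ODE-match makes the final quantity zero, and the strictness noted below \eqref{eq:comparison principle} (strict concavity of $\phi$ together with H\"older applied to \eqref{eq:dLdt}) forces $\Gamma_{t_0}$ to be a Euclidean circle. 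But every Euclidean circle on $S^2$ is a parallel, whose profile $d/L = \sin(\pi\ell/L)/\pi$ strictly exceeds $a^{-1}\arctan((a/\pi)\sin(\pi\ell/L))$ off the diagonal (since $\arctan u < u$ for $u>0$), contradicting $Z(x_0,y_0,t_0)=0$. The diagonal case $x_0=y_0$ is ruled out by Lemma \ref{lem:chordarc expansion} together with the expansion of the barrier, which yields
\[
Z(x,y,t_0) = \tfrac{1}{24}\!\left[\tfrac{4(\pi^2+2a(t_0)^2)}{L(t_0)^2} - (\kappa(x_0)^2+1)\right]\!\ell^3 + O(\ell^5),
\]
whose leading coefficient is strictly positive by continuity from $t<t_0$ (strict off-diagonal positivity there forces the corresponding curvature bound at every point of $\Gamma_t$). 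Sending $\epsilon\to 0$ yields \eqref{eq:chord arc estimate}. The main obstacle will be the ODE tightness verification of paragraph two: while algebraically routine, it is precisely this identity that dictates the temporal factor $\E^{-4\pi^2\tau}$.
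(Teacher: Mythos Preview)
Your approach is essentially the paper's: verify that the Andrews--Bryan barrier satisfies \eqref{eq:comparison principle} with equality, perturb for strictness, and run the maximum principle on $Z$. The difference is the perturbation mechanism: the paper scales $\phi\mapsto(1-\varepsilon)\phi$, whereas you shift $a\mapsto a+\epsilon$. Both give strict off-diagonal positivity at $t=0$, and both are compatible with \eqref{eq:comparison principle} (yours because the identity holds for every $a$; the paper's because the only nonlinear term in \eqref{eq:comparison principle} is $-8\pi\phi'^2/\sin(\pi\ell/L)\le 0$, so the scaled function becomes a subsolution). Your handling of the equality case via the explicit parallel profile is also correct and slightly different from the paper's (the paper simply assumes $\Gamma_0$ is not a parallel and invokes uniqueness).

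The substantive gap is at the diagonal. The paper's perturbation gives $\phi_\varepsilon'(0)=1-\varepsilon<1$, so $Z_\varepsilon\sim\varepsilon\,\ell>0$ near the diagonal \emph{for all} $t$, and any first zero is automatically off-diagonal. Your perturbation keeps $\phi'(0)=1$, so you must rule out the diagonal by hand. But your argument for this does not close: strict off-diagonal positivity of $Z$ at times $t<t_0$ yields, upon comparing third-order expansions, only the \emph{non-strict} bound $\kappa^2+1\le 4(\pi^2+2a(t)^2)/L(t)^2$, and passing $t\nearrow t_0$ preserves only the weak inequality. Thus the cubic coefficient in your expansion of $Z(\cdot,\cdot,t_0)$ is merely $\ge 0$, not $>0$ as you claim, and the $O(\ell^5)$ remainder could in principle force $Z<0$ near such a point. (Note too that $Z$ vanishes identically on the diagonal, so ``first time at which $Z$ vanishes'' already requires care.) The cleanest fix is to adopt the paper's $(1-\varepsilon)$-scaling, which makes the diagonal a non-issue.
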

\begin{proof}
We may assume that $\Gamma_0$ is not a parallel in $S^2$. Consider, for any $\varepsilon\in(0,1)$, the function $\phi_\varepsilon:=(1-\varepsilon)\phi$. Observe that $\phi_\varepsilon$ still satisfies Properties \ref{key properties}. Indeed, properties \eqref{key property i}-\eqref{key property iii} require only cursory inspection and property \eqref{key property iv} for $\phi_\epsilon$ is follows from property \eqref{key property iv} for $\phi$ (applied with $L \mapsto (1-\epsilon)L$).

Since $\phi_\varepsilon$ satisfies the strict inequality $\phi_\varepsilon'(0)<1$, the difference $Z_\varepsilon:=d-L\phi_\varepsilon(\frac{\ell}{L},\cdot)$ is strictly positive near the diagonal on any simple spherical curve (since $d(-\frac{s}{2},\frac{s}{2})=s+o(s)$ as $s\to 0$). Thus, if $Z_\varepsilon$ ever attains a strictly negative value, then it must have first reached zero at some off-diagonal point $(x,y)$ at some interior time $t$. In particular, $\Gamma_t$ cannot be a parallel, in which case we have established that the inequality \eqref{eq:comparison principle} holds strictly at $(x,y,t)$. But this is impossible: $\phi$ satisfies \eqref{eq:comparison principle} with equality (see \cite[p. 80]{EGF}), and it follows that $\phi_\varepsilon$ satisfies the reverse inequality of \eqref{eq:comparison principle}.
\end{proof}

\section{The curvature estimate}
\label{sec:curv}

As in the planar case, the sharp chord-arc estimate implies a bound on the curvature: 

\begin{corollary}
If $\frac{d}{L} \geq \frac{1}{a}\arctan\left(\frac{a}{\pi}\sin\left(\frac{\pi\ell}{L}\right)\right)$ at $t=0$ for some $a>0$, then
\begin{equation}\label{eq:curvature bound}
\kappa^2+1\le\left(\frac{2\pi}{L}\right)^2\left(1+\tfrac{2a^2}{\pi^2}\mathrm{e}^{-8\pi^2\tau}\right) \;\;\text{at all}\;\; t\in[0,T)\,.
\end{equation}
\end{corollary}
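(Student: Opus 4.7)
The plan is to extract the curvature bound by comparing the Taylor expansion of the chord-arc profile $\psi_\Gamma$ (given by Lemma \ref{lem:chordarc expansion}) against the expansion of the barrier function appearing in Theorem \ref{thm:chordarc estimate}, matched at the cubic order in the arclength parameter $\ell$.

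More precisely, writing $\tilde a := a\mathrm{e}^{-4\pi^2\tau}$, I would expand the barrier
\[
L\phi_{\tilde a}(z) := \frac{L}{\tilde a}\arctan\!\left(\tfrac{\tilde a}{\pi}\sin(\pi z)\right)
\]
around $z=0$ using the elementary Taylor series of $\sin$ and $\arctan$. A short computation yields
\[
\phi_{\tilde a}(z) = z - \tfrac{\pi^2 + 2\tilde a^2}{6}z^3 + O(z^5),
\]
so that substituting $z=\ell/L$ gives
\[
L\phi_{\tilde a}(\ell/L) = \ell - \tfrac{\pi^2 + 2\tilde a^2}{6L^2}\ell^3 + O(\ell^5)
\]
as $\ell\to 0$. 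On the other hand, Theorem \ref{thm:chordarc estimate} combined with the definition \eqref{eq:chordarc definition} implies $\psi_\Gamma(\ell)\ge L\phi_{\tilde a}(\ell/L)$ for all $\ell\in[0,L]$, while Lemma \ref{lem:chordarc expansion} yields
\[
\psi_\Gamma(\ell) = \ell - \tfrac{1}{24}\bigl(\max_{\Gamma}\kappa^2+1\bigr)\ell^3 + O(\ell^5).
\]

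Subtracting the two expressions, dividing by $\ell^3$, and sending $\ell\searrow 0$ gives
\[
-\tfrac{1}{24}\bigl(\max_{\Gamma}\kappa^2+1\bigr) \ge -\tfrac{\pi^2 + 2\tilde a^2}{6L^2}\,,
\]
which rearranges to
\[
\max_{\Gamma}\kappa^2 + 1 \le \left(\tfrac{2\pi}{L}\right)^2\!\left(1+\tfrac{2a^2}{\pi^2}\mathrm{e}^{-8\pi^2\tau}\right),
\]
as required. Since this bound holds for the maximum over $\Gamma_t$, it holds pointwise, yielding \eqref{eq:curvature bound}.

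There is no serious obstacle here: the work is entirely in the expansions, and both are already supplied. The only minor point to be careful about is ensuring the expansion in Lemma \ref{lem:chordarc expansion} is indeed sharp enough to deliver $\max_\Gamma\kappa^2$ (rather than a pointwise value) at the cubic coefficient; this follows from the fact that the upper bound $\ell - \frac{1}{24}\bar\kappa^2(x_0)\ell^3 + O(\ell^5)$ produced in the proof of Lemma \ref{lem:chordarc expansion} can be applied at a point $x_0$ where $\bar\kappa^2$ is maximal, which gives the smallest such cubic-order upper bound on $\psi_\Gamma$ and hence the desired estimate.
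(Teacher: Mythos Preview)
Your proposal is correct and follows essentially the same approach as the paper: expand the barrier $\frac{1}{\tilde a}\arctan(\frac{\tilde a}{\pi}\sin(\pi z))$ to third order in $z$, compare the cubic coefficient with that of the chord-arc profile expansion in Lemma~\ref{lem:chordarc expansion}, and read off the curvature bound. Your expansion $\phi_{\tilde a}(z) = z - \tfrac{\pi^2+2\tilde a^2}{6}z^3 + O(z^5)$ agrees with the paper's (written there as $z - (\tfrac{a^2}{3}+\tfrac{\pi^2}{6})z^3 + O(z^5)$), and the resulting inequality is exactly \eqref{eq:curvature bound}.
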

\begin{proof}
Using \eqref{eq:chord arc estimate}, compare \eqref{eq:chord arc expansion} with
\[
\tfrac{1}{a}\arctan\left(\tfrac{a}{\pi}\sin(\pi z)\right)=z-\left(\tfrac{a^2}{3}+\tfrac{\pi^2}{6}\right)z^3+O(z^5).\qedhere
\]
\end{proof}

\section{Convergence to a round point or a great circle}
\label{sec:conv}

We conclude by outlining how the curvature estimate \eqref{eq:curvature bound} leads quickly and directly to Theorem \ref{thm:S2Grayson}. The argument is very similar to that of the planar case 
(as presented, e.g., in \cite[\S 3.5]{EGF}.)

Note first that Lemma \ref{lem:chordarc expansion} ensures that some $a>0$ satisfying the hypothesis of Theorem \ref{thm:chordarc estimate} can always be found when $\Gamma_0$ is of class $C^2$. (If $\Gamma_0$ is less regular, but a suitable existence result is still available, then we may simply wait a short time $\delta$ for the flow to smooth $\Gamma_0$, while still remaining simple, and proceed with $\Gamma_\delta$ replacing $\Gamma_0$.)


\subsection{Finite maximal time} In this case, a well-known argument employing Bernstein-type estimates for the derivatives of $\kappa$ ensures that $\limsup_{t\to T}\max_{\Gamma_t}\kappa^2=\infty$ (see, e.g., \cite[Lemma  2.11]{MR1046497}). So the curvature estimate \eqref{eq:curvature bound} and the monotonicity of $L$ imply that $L(t)\to 0$ as $t\to T$. In particular, since the diameter is controlled from above by $L$, we see that each $\Gamma_t$ must lie in some ball $B_{r(t)}(z(t))$ such that $r(t) \to 0$ as $t\to T$. As the avoidance principle then also ensures that $\Gamma_t$ is contained in $B_{r(t_0)}(z(t_0))$ for every $t\geq t_0$,  we must have $\Gamma_t\to z$ as $t\to T$ for some $z\in S^2$. 

Applying the curvature estimate \eqref{eq:curvature bound} to the first variation for length \eqref{eq:dLdt}, and then using H\"older's inequality and Fenchel's theorem gives
\[
L^2-4\pi^2\left(1+\tfrac{2a^2}{\pi^2}\right) \le \frac{1}{2}\frac{d}{dt}L^2 \le L^2-4\pi^2. 
\]
As $\lim_{t\to T}L(t)=0$, integrating these inequalities yields the inequalities
\[
2\pi\sqrt{1-\mathrm{e}^{-2(T-t)}}\le L\le  2\pi\sqrt{\left(1+\tfrac{2a^2}{\pi^2}\right)\left(1-\mathrm{e}^{-2(T-t)}\right)}\,.
\]
This in turn gives an estimate for $\tau=\int_0^t L^{-2} dt$:
\[
-\frac{1}{8\pi^2+16a^2}\log\left(\frac{e^{2(T-t)}-1}{e^{2T}-1}\right)\le\tau\le-\frac{1}{8\pi^2}\log\left(\frac{e^{2(T-t)}-1}{e^{2T}-1}\right)\,.
\]
In particular, $\tau\to\infty$ as $t\to T$. Feeding this estimate for $\tau$ back into \eqref{eq:curvature bound} gives
\ba
\kappa^2+1\le
{}& \left(\tfrac{2\pi}{L}\right)^2\left(1+C\left(e^{2(T-t)}-1\right)^{\delta}\right)\,,\label{eq:kappa decay finite time}
\ea
where $\delta:=\frac{1}{1+\frac{2a^2}{\pi^2}}$ and $C:=\frac{2a^2}{4\pi^2}\left(e^{2T}-1\right)^{-\delta}$. Applying \eqref{eq:kappa decay finite time} to \eqref{eq:dLdt} to again estimate $\frac{d}{dt}L^2$ yields an improved upper bound for $L$: 
\ba\label{eq:L decay finite time}
L^2\le
{}& 4\pi^2\left(1-\mathrm{e}^{-2(T-t)}\right)\left(1+\tfrac{C}{\delta+1}(\mathrm{e}^{2(T-t)}-1)^{\delta}\right)\,;
\ea
feeding this into \eqref{eq:kappa decay finite time} gives, in particular, that $(1-\mathrm{e}^{-2(T-t)})\kappa^2\le C_0^2$. 

Note that $\sqrt{1-\e^{-2(T-t)}} \kappa$ is precisely the curvature of the rescaled curve $\frac{\Gamma_t-z}{\sqrt{1-\e^{-2(T-t)}}}$. By the (time interior) Bernstein estimates mentioned above, the bound on rescaled curvature also implies bounds on all derivatives of (rescaled) curvature. 


To see that the rescaled curvature converges to 1, consider
\[\frac{1}{L}\int\left\vert\sqrt{1-\mathrm{e}^{-2(T-t)}}\vert\bar\kappa\vert-1\right\vert^2={}\frac{1}{L}\int\left((1-\mathrm{e}^{-2(T-t)})(\kappa^2+1)-2\sqrt{1-\mathrm{e}^{-2(T-t)}}\vert\bar\kappa\vert+1\right).\]
Applying \eqref{eq:kappa decay finite time} to the first term, Fenchel's theorem to the second, and then recalling the length estimates \eqref{eq:L decay finite time} and $L^2\ge 4\pi^2(1-\mathrm{e}^{-2(T-t)})$, we find that 
\bann
\frac{1}{L}\int\left\vert\sqrt{1-\mathrm{e}^{-2(T-t)}}\vert\bar\kappa\vert-1\right\vert^2
\le{}&3C(\mathrm{e}^{2(T-t)}-1)^\delta\,.
\eann

Since $\kappa_s$ is bounded after rescaling, interpolation now yields the desired decay estimate
\[
\left\vert\sqrt{1-\mathrm{e}^{-2(T-t)}}|\bar\kappa|-1\right\vert^2\le C(\mathrm{e}^{2(T-t)}-1)^\delta,
\]
with possibly worse constants $C$ and $\delta$ (though the exponent can be improved by iterating the bootstrapping process). Decay estimates for all derivatives of (rescaled) curvature now also follow by interpolation. 

Since $\kappa$ is the (normal) speed of the curve shortening flow, the decay estimates for rescaled curvature and its derivatives may be converted into estimates for the rescaled position vector and its derivatives (cf. \cite[Lemmas 2.14 and 2.16]{EGF}; note that curvature and its derivatives control torsion and its derivatives for curves on the sphere
). These imply smooth convergence of the rescaled curves $\frac{\Gamma_t-z}{\sqrt{1-\e^{-2(T-t)}}}$ to the unit circle in $T_zS^2$.

\subsection{Infinite maximal time} In this case, the monotonicity of $L$ (recall \eqref{eq:dLdt}) ensures that $L\to L_\infty$; as the lifespan is infinite, $L_\infty$ must be at least $2\pi$ since, by \eqref{eq:DtL},
\[
\frac{d}{dt}L^2\le 2(L^2-4\pi^2)\,.
\]
In particular, $\tau(t)\doteqdot\int_0^t\frac{dt}{L^2}$ remains comparable to $t$ as $t\to\infty$, so the curvature estimate \eqref{eq:curvature bound} implies that $L_\infty= 2\pi$ and that $\kappa^2$ decays exponentially in $t$ as $t\to\infty$.

The Bernstein estimates now provide bounds on all derivatives of $\kappa$, and the exponential decay of $\kappa$ implies, by interpolation, that those derivatives also decay exponentially in time.

Since $\kappa$ is the (normal) speed of the curve shortening flow, the exponential decay estimates for $\kappa$ and its derivatives may be converted into estimates for the position vector and its derivatives, which imply smooth convergence to a great circle in $S^2$.

\bibliographystyle{acm}
\bibliography{../bibliography}

\end{document}